\numberwithin{equation}{section}
\newtheorem{theorem}{Theorem}[section]
\newtheorem{lemma}[theorem]{Lemma}
\begin{document}
\author{Alexander E. Patkowski}
\title{On a Bessel function series related to the Riemann xi function}

\maketitle
\begin{abstract} We establish new Fourier integral evaluations involving the Riemann xi function related to a series involving Bessel function of the first kind. We show this infinite series involving the Bessel function of the first kind solves a boundary value problem for the cylindrical heat equation.  \end{abstract}

\keywords{\it Keywords: \rm Fourier Integrals; Riemann xi function; Heat equation.}

\subjclass{ \it 2010 Mathematics Subject Classification 11M06, 33C15.}

\section{Introduction and Main formulas}
The Riemann xi function is $\Xi(t)=\xi(\frac{1}{2}+it),$ where $\xi(s):=\frac{1}{2}s(s-1)\pi^{-\frac{s}{2}}\Gamma(\frac{s}{2})\zeta(s),$ $\Gamma(\frac{s}{2})$ being the gamma function, and the Riemann zeta function $\zeta(s)$ is defined for $\Re(s)>1,$ to be the series $\sum_{n\ge1}n^{-s}$ (see [7, 13] for background information). Evaluations of Fourier integrals for the Riemann xi function have a long history, dating back to the work of Koshliakov and Ramanujan [8, 12]. The classical application is found in Titchmarsh [13] to illustrate a proof of Hardy's theorem that infinitely many critical zeros of the Riemann zeta function lie on the line $\frac{1}{2}+it,$ $t\in\mathbb{R}.$ Perhaps the most common example is [13]
\begin{equation}\int_{0}^{\infty}\frac{\Xi(y)}{y^2+\frac{1}{4}}\cos(xy)dy=\frac{\pi}{2}\left(e^{x/2}-2e^{-x/2}\psi(e^{-2x})\right),\end{equation}
 which gives a nice connection to the theta function $\psi(y)=\sum_{n\ge1}e^{-n^2y}.$ In fact, (1.1) implies the well known functional equation [9, pg.120, eq.(4.1.12)] by replacing $x$ by $-x.$ \par
Many authors [3, 4, 5, 8, 10, 12] have developed further evaluations of integrals of the same class as (1.1), many of which contain certain series involving Bessel functions. Infinite series identities involving Bessel functions have been extended by several authors [1, 2] to have relevance in a number theoretic context. The purpose herein is to note an example which appears to have a connection to a curious boundary value problem. Theorems 1.2 and 1.3 will provide new solutions to a cylindrical heat equation with certain boundary conditions given in the following section. The proofs of our main theorems can be found in the third section. As usual [9, pg.115] (or [6]), the Bessel series of the first kind is given as
$$J_v(r)=\sum_{n=0}^{\infty}\frac{(-1)^n}{n!\Gamma(n+v+1)}\left(\frac{r}{2}\right)^{2n+v},$$ and the confluent hypergeometric function is given by [9, pg.193], $|x|<\infty,$
 $$\frac{\Gamma(a)}{\Gamma(b)}{}_1F_1(a;b;x)=\sum_{n=0}^{\infty}\frac{\Gamma(a+n)}{\Gamma(b+n)}\frac{x^n}{n!}.$$
First we offer a connection between the square of the Riemann xi function and the Bessel series presented in this study.
\begin{theorem} For $x\in\mathbb{R},$ and real number $t>0,$
$$\int_{0}^{\infty}\frac{\Xi^2(y)}{(y^2+\frac{1}{4})^2}\Big|{}_1F_1\left(\frac{1}{4}+\frac{iy}{2};1;-\frac{r^2}{4t}\right)\Big|^2\cos(xy)dy=e^{-x/2}\sqrt{t}\int_{0}^{\infty}H_1(ye^{-x})H_1(y)dy,$$
where
$$H_1(y):=\sum_{n=1}^{\infty}e^{-tn^2y^2}J_0(rny)-2\frac{\sqrt{\pi}}{\sqrt{t}y}{}_1F_1\left(\frac{1}{2};1;-\frac{r^2}{4t}\right).$$

\end{theorem}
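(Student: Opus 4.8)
The plan is to evaluate the right-hand side by the Mellin--Parseval theorem, identifying the Mellin transform of $H_1$ on the critical line with the quantity whose square modulus is the integrand on the left. First I would set $\tilde H_1(s)=\int_0^\infty H_1(u)u^{s-1}\,du$ and compute the Mellin transform of the Bessel series term by term. Using the standard evaluation
$$\int_0^\infty e^{-\alpha u^2}J_0(\beta u)u^{s-1}\,du=\frac{\Gamma(s/2)}{2\alpha^{s/2}}\,{}_1F_1\!\left(\frac{s}{2};1;-\frac{\beta^2}{4\alpha}\right),$$
with $\alpha=tn^2$ and $\beta=rn$, the argument $-\beta^2/(4\alpha)=-r^2/(4t)$ becomes independent of $n$, so that summation over $n$ produces a factor $\zeta(s)$:
$$\int_0^\infty\Big(\sum_{n\ge1}e^{-tn^2u^2}J_0(rnu)\Big)u^{s-1}\,du=\frac{\Gamma(s/2)}{2t^{s/2}}\,{}_1F_1\!\left(\frac{s}{2};1;-\frac{r^2}{4t}\right)\zeta(s),\qquad \Re(s)>1.$$

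Next I would convert this to the xi function. Since $\xi(s)=\frac12 s(s-1)\pi^{-s/2}\Gamma(s/2)\zeta(s)$ and $s(s-1)=-(y^2+\frac14)$ on $s=\frac12+iy$, multiplying and dividing by $\pi^{s/2}$ shows that on the critical line this transform equals a factor of modulus $(\pi/t)^{1/4}$ times $\frac{\Xi(y)}{y^2+1/4}\,{}_1F_1(\frac14+\frac{iy}{2};1;-\frac{r^2}{4t})$. The role of the subtracted multiple of $u^{-1}$ in $H_1$ is now visible: as $u\to0^+$ the Bessel series behaves like $c\,u^{-1}$, where $c$ is exactly the residue at the simple pole $s=1$ coming from the pole of $\zeta$, and subtracting this singular term makes $H_1$ square-integrable near the origin and promotes $\tilde H_1(s)$ to the analytic continuation of the above across the strip $0<\Re(s)<1$. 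Hence $\tilde H_1(\frac12+iy)$ is, up to a factor of constant modulus, the quantity whose square modulus is the integrand on the left.

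Finally I would apply Mellin--Parseval to the correlation on the right. The transform of $u\mapsto H_1(ue^{-x})$ is $e^{sx}\tilde H_1(s)$, so
$$\int_0^\infty H_1(ye^{-x})H_1(y)\,dy=\frac{1}{2\pi i}\int_{(1/2)}e^{sx}\tilde H_1(s)\tilde H_1(1-s)\,ds.$$
Because $H_1$ is real, $\tilde H_1(\frac12-iy)=\overline{\tilde H_1(\frac12+iy)}$, so on the line the integrand becomes $e^{x/2}e^{ixy}\,|\tilde H_1(\frac12+iy)|^2$; the factor $|\tilde H_1(\frac12+iy)|^2$ is even, which collapses $e^{ixy}$ to $\cos(xy)$ and, after substituting the critical-line value from the previous step and matching the overall constant from the Parseval normalization together with the residue $c$, yields the claimed identity with the prefactor $e^{-x/2}\sqrt{t}$.

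The main obstacle will be the analytic justification surrounding the subtraction and the contour. Concretely I must (i) derive the $u\to0^+$ asymptotics of $\sum_{n\ge1}e^{-tn^2u^2}J_0(rnu)$ precisely enough to fix the subtraction coefficient and to confirm that $\tilde H_1$ coincides on $\Re(s)=\frac12$ with the analytic continuation of the transform computed above (a Euler--Maclaurin or Mellin--Barnes estimate), and (ii) control the growth of $\Xi(y)$ and of ${}_1F_1(\frac14+\frac{iy}{2};1;-\frac{r^2}{4t})$ as $|y|\to\infty$ so that $\tilde H_1$ lies in $L^2$ of the line, the termwise Mellin integration is legitimate, and the shift of the Parseval contour to the critical line is valid. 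The strong exponential decay of $\Xi$ should dominate the hypergeometric factor, but making this rigorous, and reconciling the multiplicative constants, is the delicate part.
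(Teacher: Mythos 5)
Your proposal is correct and follows essentially the same route as the paper: the termwise Mellin transform giving $\frac{\Gamma(s/2)}{2t^{s/2}}\,{}_1F_1(\frac{s}{2};1;-\frac{r^2}{4t})\zeta(s)$, the identification of the subtracted $y^{-1}$ term with the residue of the pole at $s=1$ (which the paper gets by simply shifting the contour from $\Re(s)>1$ into the strip, so no separate asymptotic analysis is needed), and Mellin--Parseval applied to $f(y)=H_1(ye^{-x})$, $g(y)=H_1(y)$ on the line $\Re(s)=\tfrac12$. The only cosmetic difference is that the paper converts $\tilde H_1(s)\tilde H_1(1-s)$ via the functional equation $\xi(s)=\xi(1-s)$, whereas you use the conjugate symmetry $\tilde H_1(\tfrac12-iy)=\overline{\tilde H_1(\tfrac12+iy)}$; on the critical line these are equivalent and both yield the $\big|{}_1F_1\big|^2\,\Xi^2$ integrand with $\cos(xy)$.
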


Next we offer a curious integral formula representation for $H_1(y),$ which can be seen as a generalized form of the functional equation for the Jacobi theta function [9, pg.120, eq.(4.1.12)]. This can be seen by setting $r=0,$ and evaluating the integral on the right side. (This is also observed from applying the residue theorem to the last equation in (3.7) below.)

\begin{theorem} For $x>0,t>0,$ and $|\arg(-\frac{r^2}{t})|<\frac{\pi}{2},$
$$\sum_{n=1}^{\infty}e^{-tn^2x^2}J_0(rnx)-\frac{\sqrt{\pi}}{2\sqrt{t}x}{}_1F_1\left(\frac{1}{2};1;-\frac{r^2}{4t}\right)$$
\begin{equation}=\frac{e^{-\frac{r^2}{4t}}}{2}\int_{1}^{\infty} \frac{1}{\sqrt{y^2-1}}\left(\frac{\pi}{x\sqrt{t}}\sum_{n\ge1}G_1\left(\frac{ny\pi}{x\sqrt{t}}\right)-\frac{1}{ y}\int_{0}^{\infty}G_1(w)dw\right)dy,\end{equation}

where $G_1(z)=e^{-z^2}zJ_{0}(2\sqrt{-\frac{r^2}{4t}}z).$

\end{theorem}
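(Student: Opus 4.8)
The plan is to prove the identity by matching the Mellin transforms (in the variable $x$) of the two sides, the engine being the functional equation of $\zeta(s)$ dressed with Kummer's transformation for ${}_1F_1$. Writing $H_1(x)$ for the left-hand side, I would first record its Mellin transform. Using the classical evaluation $\int_0^\infty x^{\mu-1}e^{-ax^2}J_0(bx)\,dx=\frac{\Gamma(\mu/2)}{2a^{\mu/2}}\,{}_1F_1(\frac\mu2;1;-\frac{b^2}{4a})$ with $a=tn^2$, $b=rn$, the crucial point is that $\frac{b^2}{4a}=\frac{r^2}{4t}$ is independent of $n$, so summing over $n$ gives
$$\int_0^\infty x^{s-1}H_1(x)\,dx=\frac{\Gamma(s/2)}{2t^{s/2}}\,\zeta(s)\,{}_1F_1\!\Big(\tfrac s2;1;-\tfrac{r^2}{4t}\Big),$$
the subtracted term $2\sqrt{t\pi}\,{}_1F_1(\frac12;1;-\frac{r^2}{4t})x^{-1}$ being precisely what cancels the pole of $\zeta$ at $s=1$ and permits moving the inversion contour into $\Re(s)<1$.

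Second, I would treat the inner object on the right. Put $G_1(z)=ze^{-z^2}J_0(\beta z)$ with $\beta^2=-r^2/t$ and $P(y)=\sum_{n\ge1}G_1(ny)-\frac1y\int_0^\infty G_1$. The same Bessel--Gauss integral (now with $\mu=s+1$) gives $\int_0^\infty z^{s-1}G_1(z)\,dz=\frac{\Gamma(\frac{s+1}2)}{2}\,{}_1F_1(\frac{s+1}2;1;\frac{r^2}{4t})$, and since $\int_0^\infty G_1=\frac12 e^{r^2/4t}$ the subtraction again removes the $s=1$ pole, yielding $\mathcal M[P](s)=\zeta(s)\frac{\Gamma(\frac{s+1}2)}2{}_1F_1(\frac{s+1}2;1;\frac{r^2}{4t})$. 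Kummer's transformation ${}_1F_1(a;1;w)=e^{w}{}_1F_1(1-a;1;-w)$ then lets me pull out the factor $e^{r^2/4t}$ that will meet the prefactor $e^{-r^2/4t}$ on the right.

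Third, I would insert the Cahen--Mellin representation of $P$ into the right-hand side and interchange the $y$-integration with the contour integral. Recognising the right side as a multiplicative convolution $h(x)=\int k(xy)P(y)\,dy$ with kernel $k(u)=(tu^2-\pi^2)^{-1/2}$, the needed kernel moment is the Beta integral $\int_1^\infty w^{s-1}(w^2-1)^{-1/2}\,dw=\frac{\sqrt\pi}2\frac{\Gamma(\frac{1-s}2)}{\Gamma(1-\frac s2)}$, which supplies a clean ratio of Gamma functions. Assembling $\mathcal M[h](s)=e^{-r^2/4t}\mathcal M[k](s)\,\mathcal M[P](1-s)$, the $e^{\pm r^2/4t}$ cancel, one $\Gamma(1-\frac s2)$ cancels against the kernel denominator, and what remains is $\tfrac14\pi^{s-1/2}t^{-s/2}\Gamma(\frac{1-s}2)\zeta(1-s)\,{}_1F_1(\frac s2;1;-\frac{r^2}{4t})$. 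At this point the symmetric functional equation $\pi^{-s/2}\Gamma(\frac s2)\zeta(s)=\pi^{-(1-s)/2}\Gamma(\frac{1-s}2)\zeta(1-s)$ converts $\Gamma(\frac{1-s}2)\zeta(1-s)$ into $\pi^{1/2-s}\Gamma(\frac s2)\zeta(s)$, collapsing the expression onto the Mellin transform of $H_1$ recorded in the first step; inverting the Mellin transform finishes the proof. This is exactly the sense in which the identity ``generalizes the Jacobi theta functional equation,'' the $r=0$ case stripping the ${}_1F_1$'s down to the classical transformation.

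I expect the main obstacle to be the convolution step, specifically the mismatch between the stated lower limit $y=1$ and the branch point $y=\pi/(x\sqrt t)$ of the kernel $(t(xy)^2-\pi^2)^{-1/2}$: the clean Beta-integral moment is valid when the branch point sits at the lower limit, so one must either restrict to the regime $tx^2\ge\pi^2$ and continue analytically in $(x,t,r)$, or account explicitly for the segment between $1$ and the branch point --- this is also where the overall constant (a factor of $2$, together with the normalization of the $x^{-1}$ subtraction) must be pinned down. Ancillary technical points, all standard but requiring care, are the justification of the interchange of summation, $y$-integration and contour, the overlap of the convergence strips ($\Re s<1$ for the kernel moment against the region of absolute convergence of $\mathcal M[P](1-s)$), and the Stirling/convexity estimates on $\Gamma(s/2)\zeta(s)\,{}_1F_1(\frac s2;1;\cdot)$ needed to license the Mellin inversion.
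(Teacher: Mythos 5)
Your proposal is essentially the paper's own argument: the paper likewise records $\mathcal{M}[H_1](s)=\frac{\Gamma(s/2)}{2t^{s/2}}\zeta(s)\,{}_1F_1(\frac{s}{2};1;-\frac{r^2}{4t})$, applies the symmetric functional equation of $\zeta$, rewrites ${}_1F_1(\frac{1-s}{2};1;-\frac{r^2}{4t})$ as the Mellin transform of $G_1$ via an integral representation (its eq.\ (3.10)) that is exactly your Bessel--Gauss evaluation combined with Kummer's transformation, absorbs $\zeta(s)$ through the M\"untz formula, and finishes with Parseval against the kernel whose moment is $\frac{\sqrt{\pi}}{2}\,\Gamma(\frac{1-s}{2})/\Gamma(1-\frac{s}{2})$. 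The obstacles you flag at the end are real but are defects of the published statement rather than gaps in your plan: carried out carefully, the Parseval step forces the lower limit $\pi/(x\sqrt{t})$ and an overall factor of $2$ (note $\int_0^\infty e^{-z}z^{(s-1)/2}J_0(2\sqrt{xz})\,dz=2\mathcal{M}[G_1](s)$, and the subtracted term should carry the residue coefficient $\frac{\sqrt{\pi}}{2\sqrt{t}}$ of the paper's eq.\ (3.5), not $2\sqrt{t\pi}$), whereas the paper's proof silently writes $1$ for the lower limit and drops these constants, so your bookkeeping is if anything more accurate than the published argument.
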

To see how to obtain the functional equation for the Jacobi theta function, note from [9, pg.95, eq.(3.3.28)] that for $\Re(s)<1,$
$$\frac{2}{\sqrt{\pi}}\int_{1}^{\infty}\frac{y^{s-1}}{\sqrt{y^2-1}}dy=\frac{\Gamma(\frac{1-s}{2})}{\Gamma(\frac{1-s+1}{2})}.$$
It may also be seen from this integral coupled with Parseval's theorem for Mellin transforms that for $a>0,$
$$\int_{1}^{\infty}\frac{ye^{-ay^2}}{\sqrt{y^2-1}}dy=\frac{e^{-a}}{\sqrt{a}}.$$ Now setting $r=0$ in Theorem 1.2 gives [9, pg.120, eq.(4.1.12)] once applying these integral evaluations with $J_0(0)=1,$ by absolute convergence. \par
We mention in passing that the asymptotic estimate [9, pg.130] (or [9, pg.209, eq.(5.4.7)]) of $J_0(r)$ for large $r$ may be applied to see that 
$$\sqrt{r}\sum_{n=1}^{\infty}J_0(nr)e^{-n^2\kappa t}=\frac{1}{\sqrt{\pi}}\sum_{n=1}^{\infty}\frac{1}{\sqrt{n}}\left(\cos(nr)+\sin(nr)\right)e^{-n^2\kappa t}+O(r^{-1}),$$
as $r\rightarrow\infty.$ While the series on the right side of this estimate is not quite the Jacobi theta function, it does indeed still provide a solution to the heat equation. When $t=0$ the series only converges conditionally by the oscillatory nature of the Bessel function. For an expansion of the Fourier series on the right side when $t=0,$ see [9, pg.128, eq.(4.2.14)]. 
\par 
The next theorem shows that it is also possible to obtain a Riemann xi Fourier integral
directly connected with $H_1(y).$
\begin{theorem} For $s\in\mathbb{C},$ set $$F^{+}(x,s):=e^{xs}{}_1F_1\left(s/2;1;-\frac{r^2}{4t}\right)+e^{x(1-s)}{}_1F_1\left((1-s)/2;1;-\frac{r^2}{4t}\right).$$ Then for $x\in\mathbb{R},$

$$\int_{0}^{\infty}\frac{\Xi(y)}{(y^2+\frac{1}{4})}F^{+}\left(x,\frac{1}{2}+iy\right)dy=\sum_{n=1}^{\infty}e^{-tn^2e^{-2x}}J_0(rne^{-x})-\frac{\sqrt{\pi}}{2\sqrt{t}}{}_1F_1\left(\frac{1}{2};1;-\frac{r^2}{4t}\right)e^{x}.$$

\end{theorem}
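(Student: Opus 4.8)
The plan is to recognize the right-hand side as the inverse Mellin transform of the $\Xi$-expression appearing on the left, evaluated on the critical line. Writing $s=\tfrac12+iy$ one has $s(s-1)=-(y^2+\tfrac14)$, $\Xi(y)=\xi(s)$, and $e^{x(\frac12+iy)}=e^{xs}$, so the two summands of $F^{+}(\tfrac12+iy)$ are exactly the values at $s$ and at $1-s$ of $e^{xs}\,{}_1F_1(\tfrac s2;1;-\tfrac{r^2}{4t})$. Since $\tfrac{\Xi(y)}{y^2+1/4}$ is even in $y$, I would fold the symmetric kernel $F^{+}$ so that the integral over $(0,\infty)$ becomes an integral of a single summand over $(-\infty,\infty)$, and then the substitution $s=\tfrac12+iy$, $dy=-i\,ds$, turns the left-hand side into a contour integral of the form $\frac{1}{2\pi i}\int_{(1/2)}\frac{\xi(s)}{s(s-1)}\,e^{xs}\,{}_1F_1(\tfrac s2;1;-\tfrac{r^2}{4t})\,ds$ along $\Re(s)=\tfrac12$.

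The engine of the proof is the Mellin transform of the Bessel--Gaussian series. Integrating $e^{-tn^2u^2}J_0(rnu)$ against $u^{s-1}$ term by term in the defining series of $J_0$ and summing the resulting Gamma integrals yields $\tfrac12\,t^{-s/2}n^{-s}\Gamma(\tfrac s2)\,{}_1F_1(\tfrac s2;1;-\tfrac{r^2}{4t})$, the confluent hypergeometric function appearing precisely because $\big(\tfrac{rn}{2}\big)^{2k}(tn^2)^{-k}=(\tfrac{r^2}{4t})^{k}$ is independent of $n$. Summing over $n\ge 1$ produces $\zeta(s)$ for $\Re(s)>1$, and the completion $\Gamma(\tfrac s2)\zeta(s)=\tfrac{2\pi^{s/2}}{s(s-1)}\xi(s)$ converts the transform into $\frac{\xi(s)}{s(s-1)}\,{}_1F_1(\tfrac s2;1;-\tfrac{r^2}{4t})$ times an elementary power of $\pi/t$. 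This is the same transform underlying the Mellin--Parseval evaluation of Theorem 1.1, so I may take it as established; the elementary power combines with $u^{-s}$ in the inversion, and the evaluation point is chosen so that the kernel becomes $e^{xs}$.

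With this transform in hand I would begin from the inverse Mellin representation of $\sum_{n\ge1}e^{-tn^2u^2}J_0(rnu)$ on a line $\Re(s)=c>1$ and shift the contour leftward to $\Re(s)=\tfrac12$. The strip $\tfrac12<\Re(s)<c$ contains only the simple pole of $\zeta(s)$ at $s=1$ (the function $\xi$ is entire, ${}_1F_1$ is entire in $s$, and the pole of $\tfrac{1}{s(s-1)}$ at $s=0$ lies to the left of $\tfrac12$). The residue at $s=1$ is a constant multiple of $\,{}_1F_1(\tfrac12;1;-\tfrac{r^2}{4t})\,u^{-1}$, which is exactly the term subtracted in the definition of $H_1$; it therefore promotes the bare series $\sum_n e^{-tn^2u^2}J_0(rnu)$ to $H_1(u)$. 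Evaluating at the argument corresponding to $e^{-x}$ then matches the stated right-hand side, and the contour integral on $\Re(s)=\tfrac12$ is the folded form of the left-hand Fourier integral found in the first step.

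The main obstacle is analytic justification rather than formal manipulation. The leftward shift requires the horizontal segments to vanish, which hinges on the growth of ${}_1F_1(\tfrac s2;1;-\tfrac{r^2}{4t})$ along vertical lines as $|\Im s|\to\infty$; this must be weighed against the exponential decay $\Xi(y)=O\!\big(e^{-\pi|y|/4}\big)$ coming, via Stirling, from the Gamma factor in $\xi$. I would obtain the needed bound from the Bessel-type asymptotics (equivalently Kummer's transformation and an integral representation) of the confluent hypergeometric function, which for a large imaginary parameter and fixed negative argument grows only subexponentially in $|y|$, hence is swamped by the decay of $\Xi$. These estimates simultaneously make the shifted contour integral and the left-hand Fourier integral absolutely convergent and license the term-by-term Mellin integration of the second step. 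Once they are secured the identity follows, valid for all real $x$ because $F^{+}$ is invariant under $s\mapsto 1-s$, matching the functional equation $\xi(s)=\xi(1-s)$.
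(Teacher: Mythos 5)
Your proposal follows essentially the same route as the paper's proof: the Mellin transform of $\sum_{n\ge1}e^{-tn^2u^2}J_0(rnu)$ giving $\tfrac{1}{2}t^{-s/2}\Gamma(\tfrac s2)\zeta(s)\,{}_1F_1(\tfrac s2;1;-\tfrac{r^2}{4t})$ (the paper cites Gradshteyn--Ryzhik 6.631.1 where you integrate the Bessel series term by term), Mellin inversion on $\Re(s)=c>1$, a leftward contour shift to the critical line picking up the residue of $\zeta(s)$ at $s=1$ (which produces exactly the subtracted ${}_1F_1(\tfrac12;1;-\tfrac{r^2}{4t})$ term), and finally the substitution $s=\tfrac12+iy$ together with the symmetry $F^{+}(\tfrac12+iy)=F^{+}(\tfrac12-iy)$ to fold the critical-line integral into the stated Fourier integral. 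This is precisely what the paper does, since its one-line proof of Theorem 1.3 just reuses the representation (3.5)/(3.7), established by this same argument, with the argument replaced by $e^{-x}$.
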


\section{The associated Boundary value problem}  
The solution of (2.1) with the provided boundary conditions is given as our series in Theorem 1.1. We consider the heat equation in cylindrical form, where $u(r,t)$ represents temperature in a cylinder with radius $2\pi(n+1),$ for integers $n\ge1,$
\begin{equation}\kappa \nabla^2u\equiv\kappa\left(\frac{\partial^2 u}{\partial r^2}+\frac{1}{r}\frac{\partial u}{\partial r}\right)=\frac{\partial u}{\partial t},\end{equation}
with the boundary conditions:
\\*
\hspace{2mm}(i) $u(0,t)=\sqrt{\frac{\pi}{t}}u(0,\frac{\pi^2}{t})+\frac{1}{2}\sqrt{\frac{\pi}{t}}-\frac{1}{2},$ for $t>0,$
\\*
\hspace{2mm}(ii) $u(r,0)=-\frac{1}{2}+\frac{1}{r}+2\sum_{m=1}^{n}\frac{1}{\sqrt{r^2-4m^2\pi^2}},$ for $2n\pi<r<2(n+1)\pi.$
\\*
The condition (i) is the functional equation for the theta function [9, pg.120, eq.(4.1.12)], and condition (ii) is equivalent to a series evaluation found in [6, pg.936]. The Bessel series expansions which include (ii) are referred to as Schl$\ddot{o}$milch expansions [14, pg.621]. The solution is given as our series in Theorem 1.1:
\begin{equation}u(r, t)=\sum_{n=1}^{\infty}J_0(nr)e^{-n^2\kappa t}.\end{equation}
This can be shown through the standard separation of variable method [11, pg.316]. Alternatively, one can show this through the known relations
$$\frac{\partial J_0(nr)}{\partial r}=-nJ_1(nr),$$
$$\frac{\partial^2 J_0(nr)}{\partial r^2}=-n^2\frac{\partial J_1(nr)}{\partial r}=\frac{n}{r}J_1(nr)-n^2J_0(nr).$$
The growth condition $\lim_{r\rightarrow\infty}u(r,t)=0,$ may also be included in this problem, but $u(r,0)=0$ can not be satisfied for any $r>2\pi.$ The initial temperature satisfies $-1<u(r,0)<1$ since $n\ge1$ and $|J_0(x)|\le1.$ For example, if $r=3\pi$ then $n=1,$ and 
$$u(3\pi,0)=-\frac{1}{2}+\frac{1}{3\pi}+\frac{2}{\sqrt{5}\pi}<0.$$ On the other hand $u(7,0)>0.$ Consequently, since the Bessel function is oscillatory, the exponential component dampens terms so that $u(r,t)$ for $t>0$ may possibly be larger than the initial temperature. For example, it can be seen using [14, pg.684], $J_0(9.42)=-0.1803648$ which is very close to $J_0(3\pi).$ At time $t=5/\kappa$ we can approximate $u(3\pi,5/\kappa)$ to be nearly $-0.0012$ by multiplying $J_0(9.42)$ by $e^{-5}.$ Since further terms are very small due to the dampening exponential component, $u(3\pi,0)<u(3\pi,5/\kappa)<0.$ Therefore, the boundary value problem represents temperature in a cylinder with radius $2\pi(n+1),$ $n\ge1,$ which is oscillatory within $(-1,1)$ as time passes. The general solution to (2.1) with initial temperature $u(r,0)=0,$ was documented in [14, pg.616] and is attributed there to Fourier. The detailed solution by separation of variables may be found in [11, pg.317, eq.(10)], and should be compared with (2.2).

\section{Proof of Main results}
In proving our Riemann xi function integrals we will regularly make use of Parseval's theorem for Mellin transforms [9, pg.83, eq.(3.1.11)]. Define
\begin{equation}\int_{0}^{\infty}f(y)y^{s-1}dy=:\mathfrak{F}(s),\end{equation}
with inverse given by the line integral [9, pg.89],
\begin{equation}f(y)=\frac{1}{2\pi i}\int_{(c)}\mathfrak{F}(s)y^{-s}ds:=\frac{1}{2\pi i}\int_{c-i\infty}^{c+i\infty}\mathfrak{F}(s)y^{-s}ds.\end{equation}
\begin{lemma} ([9, pg.83, eq.(3.1.11)]) Suppose $\Re(s)=c$ is a real number confined to a region where $\mathfrak{F}(s)\mathfrak{G}(1-s)$ is analytic, and $\mathfrak{G}(s)$ denotes the Mellin transform of $g(y).$ Then,
$$\int_{0}^{\infty}f(y)g(y)dy=\frac{1}{2\pi i}\int_{c-i\infty}^{c+i\infty}\mathfrak{F}(s)\mathfrak{G}(1-s)ds.$$ 
\end{lemma}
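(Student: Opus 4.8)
The plan is to expand one factor of the left-hand integrand using the Mellin inversion formula (3.2), then interchange the two integrations and recognize the resulting inner integral as the Mellin transform of the other factor evaluated at the reflected argument $1-s$.

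Concretely, I would first write $f(y)$ via (3.2) along the abscissa $\Re(s)=c$, so that
$$\int_{0}^{\infty}f(y)g(y)\,dy=\int_{0}^{\infty}\left(\frac{1}{2\pi i}\int_{c-i\infty}^{c+i\infty}\mathfrak{F}(s)y^{-s}\,ds\right)g(y)\,dy.$$
Second, I would invoke Fubini's theorem to swap the order of integration, producing
$$\frac{1}{2\pi i}\int_{c-i\infty}^{c+i\infty}\mathfrak{F}(s)\left(\int_{0}^{\infty}g(y)y^{-s}\,dy\right)ds.$$
Third, I would observe that, since $y^{-s}=y^{(1-s)-1}$, the inner $y$-integral is precisely $\mathfrak{G}(1-s)$ by the defining relation (3.1). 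Assembling these three steps yields the claimed identity directly.

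The main obstacle is justifying the interchange of integration in the second step, and this is exactly where the hypothesis that $\Re(s)=c$ lies in a region where $F(s)G(1-s)$ is analytic does its work. Within such a strip both $\mathfrak{F}(s)$ and $\mathfrak{G}(1-s)$ converge absolutely along the vertical line $\Re(s)=c$, and the standard decay of Mellin transforms in vertical strips, combined with the absolute integrability of $f$ and $g$ against the relevant powers of $y$, supplies a dominating bound. I would make this precise by checking absolute convergence of the double integral
$$\int_{0}^{\infty}\int_{c-i\infty}^{c+i\infty}|\mathfrak{F}(s)|\,|g(y)|\,y^{-c}\,|ds|\,dy$$
under these analyticity and decay assumptions, after which the formal manipulation above is fully rigorous and the lemma follows.
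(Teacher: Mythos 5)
Your derivation is correct and is the standard proof of this fact, but there is nothing in the paper to compare it against: the paper does not prove Lemma 3.1 at all. It quotes it as Parseval's theorem for Mellin transforms, citing Paris--Kaminski [9, pg.~83, eq.~(3.1.11)], immediately after recording the transform pair (3.1)--(3.2). Your three steps --- expand $f$ by Mellin inversion along $\Re(s)=c$, interchange the integrals, and identify $\int_{0}^{\infty}g(y)y^{-s}\,dy=\mathfrak{G}(1-s)$ --- are exactly how the cited reference establishes the formula, so you have in effect supplied the omitted proof rather than found an alternative to it. One caveat on your justification of Fubini: analyticity of $F(s)G(1-s)$ on the line $\Re(s)=c$ does not by itself yield the domination you need, since analyticity of a Mellin transform in a strip neither implies absolute convergence of its defining integral there nor any particular decay as $|\Im(s)|\to\infty$. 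What the interchange actually requires is that $c$ lie in the strip of absolute convergence of both transforms, i.e. $\int_{0}^{\infty}|g(y)|\,y^{-c}\,dy<\infty$, together with integrability of $\mathfrak{F}(c+it)$ in $t$ (or an $L^2$ formulation \`a la Plancherel). Since the lemma's hypothesis is itself stated loosely --- as is common for this result, whose precise conditions are tailored to each application --- your proof is at the same level of rigor as the statement, but you should present those integrability conditions as additional standing assumptions rather than as consequences of the stated analyticity.
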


\begin{proof}[Proof of Theorem 1.1] By [6, pg.706, eq.6.631, \#1] for $\Re(t)>0,$ $\Re(s+v)>0,$
\begin{equation} \int_{0}^{\infty}y^{s-1}e^{-ty^2}J_v(ry)dy=\frac{r^v\Gamma(\frac{v}{2}+\frac{s}{2})}{2t^{(s+v)/2}\Gamma(v+1)}{}_1F_1\left(\frac{v+s}{2};v+1;-\frac{r^2}{4t}\right).\end{equation}
Setting $v=0,$ in (3.3), applying Mellin inversion, and inverting the desired sum gives us
\begin{equation}\sum_{n=1}^{\infty}e^{-tn^2y^2}J_0(rny)=\frac{1}{2\pi i}\int_{(p)}\frac{\Gamma(\frac{s}{2})}{2t^{s/2}}{}_1F_1\left(\frac{s}{2};1;-\frac{r^2}{4t}\right)\zeta(s)y^{-s}ds,\end{equation}
for $p>1,$ by absolute convergence. Recall [9, pg.121, eq.(4.1.15)] $|\zeta(\sigma\pm it)|$ has polynomial growth as $t\rightarrow\infty.$ From asymptotic expansions applied similarly in [3, eq.(2.8), $\lambda=(s-1)/2,$ $z=\frac{r^2}{4t}$] it can be seen that 
$${}_1F_1\left(\frac{s}{2};1;-\frac{r^2}{4t}\right)\sim \frac{1}{\sqrt{\pi}}\left(\frac{r^2(s-1)}{8t}\right)^{-1/4}\cos\left(2\sqrt{r^2(s-1)/(8t)}-\frac{\pi}{4} \right),$$ as $|s|\rightarrow\infty,$
by Kummer's transformation [9, pg.247, $a=s/2,$ $b=1,$ $z=-\frac{r^2}{4t}$],
$${}_1F_1\left(\frac{s}{2};1;-\frac{r^2}{4t}\right)=e^{-\frac{r^2}{4t}}{}_1F_1\left(1-\frac{s}{2};1;\frac{r^2}{4t}\right).$$

Hence, the integrand in (3.4) decays exponentially as $s\rightarrow \sigma\pm i\infty,$ due to Stirling's formula for the gamma function. Therefore, we may displace the contour to the left to $\Re(s)=d,$ $0<d<1,$ thereby computing residue at the pole $s=1$ to get
\begin{equation}H_1(y)=\sum_{n=1}^{\infty}e^{-tn^2y^2}J_0(rny)-\frac{\sqrt{\pi}}{2t^{1/2}}{}_1F_1\left(\frac{1}{2};1;-\frac{r^2}{4t}\right)y^{-1}\end{equation}
$$=\frac{1}{2\pi i}\int_{(d)}\frac{\Gamma(\frac{s}{2})}{2t^{s/2}}{}_1F_1\left(\frac{s}{2};1;-\frac{r^2}{4t}\right)\zeta(s)y^{-s}ds,$$
for $0<d<1.$ Now applying Lemma 3.1 to (3.5) with $f(y)=H_1(ye^{-x}),$ $g(y)=H_1(y),$ gives

\begin{equation}\sqrt{t}\int_{0}^{\infty}H_1(ye^{-x})H_1(y)dy\end{equation}
$$=\frac{1}{2\pi i}\int_{(d)}\Gamma(\frac{s}{2})^2{}_1F_1\left(\frac{s}{2};1;-\frac{r^2}{4t}\right){}_1F_1\left(\frac{1-s}{2};1;-\frac{r^2}{4t}\right)\zeta(s)^2\pi^{\frac{1}{2}-s}e^{xs}ds,$$

where we have employed the functional equation for the Riemann xi function. Putting $d=\frac{1}{2},$ multiplying by $e^{-x/2},$ and making the change of variable $s=\frac{1}{2}+it$ now gives the result.
\end{proof}

\begin{proof}[Proof of Theorem 1.2] Recalling (3.5), we have

 \begin{equation}\begin{aligned}H_1(x)&=\sum_{n=1}^{\infty}e^{-tn^2x^2}J_0(rnx)-\frac{\sqrt{\pi}}{2t^{1/2}}{}_1F_1\left(\frac{1}{2};1;-\frac{r^2}{4t}\right)x^{-1}\\
&=\frac{1}{2\pi i}\int_{(d)}\frac{\Gamma(\frac{s}{2})}{2t^{s/2}}{}_1F_1\left(\frac{s}{2};1;-\frac{r^2}{4t}\right)\zeta(s)x^{-s}ds\\
&=\frac{1}{2\pi i}\int_{(1-d)}\frac{\Gamma(\frac{1-s}{2})}{2t^{(1-s)/2}}{}_1F_1\left(\frac{1-s}{2};1;-\frac{r^2}{4t}\right)\zeta(1-s)x^{s-1}ds\\
&=\frac{1}{2\sqrt{\pi} i}\int_{(1-d)}\frac{\Gamma(\frac{s}{2})}{2t^{(1-s)/2}}{}_1F_1\left(\frac{1-s}{2};1;-\frac{r^2}{4t}\right)\zeta(s)\pi^{-s}x^{s-1}ds ,\end{aligned}\end{equation}
for $0<d<1$ by the functional equation for $\zeta(s)$ after making a change of variable $s$ to $1-s.$ From [9, pg.95, eq.(3.3.28)], $c>0,$ $a>-1,$
\begin{equation}\frac{1}{2\pi i}\int_{(c)}x^{-s}\frac{\Gamma(s)}{\Gamma(s+a+1)}ds=\frac{(1-x)^{a}}{\Gamma(a+1)},\end{equation}
if $0<x\le1,$ and $0$ if $x>1.$ 
From [6, pg.1023, eq.9.211,\#3], we have that for $\Re(\alpha+v+1)>0,$ $|\arg(x)|<\frac{\pi}{2},$
\begin{equation}{}_1F_{1}\left(-v,\alpha+1;x\right)=\frac{\Gamma(\alpha+1)e^{x}x^{-\alpha/2}}{\Gamma(\alpha+v+1)}\int_{0}^{\infty}e^{-y}y^{v+\alpha/2}J_{\alpha}(2\sqrt{xy})dy,\end{equation}
Putting $v=(s-1)/2,$ and $\alpha=0,$ in (3.9) gives for $\Re(s+1)>0,$ $|\arg(x)|<\frac{\pi}{2},$
\begin{equation}{}_1F_{1}\left(\frac{1-s}{2},1;x\right)=\frac{e^{x}}{\Gamma(\frac{s+1}{2})}\int_{0}^{\infty}e^{-y}y^{(s-1)/2}J_{0}(2\sqrt{xy})dy,\end{equation}
 
By the formula [13, pg.29, eq.(2.11.1)], if $F(x)$ satifies (i) $F'(x)$ is bounded in any finite interval, (ii) continuous, and (iii) decays like $O(x^{-N}),$ $N>1$ as $x\rightarrow\infty,$ then 
\begin{equation} \zeta(s)\int_{0}^{\infty}y^{s-1}F(y)dy=\int_{0}^{\infty}y^{s-1}\left(\sum_{n\ge1}F(ny)-\frac{1}{y}\int_{0}^{\infty}F(w)dw\right)dy,\end{equation} for $0<\Re(s)<1.$
Notice that the common region of holomorphy of (3.8) with $s$ replaced by $1-s,$ and (3.10) is $\{s: -1<\Re(s)<1\}.$ Hence, putting $a=-\frac{1}{2}$ in (3.8) with $s\rightarrow s/2,$ and applying (3.10) to Lemma 3.1 with (3.11), the last line of (3.7) gets transformed into
\begin{equation}\begin{aligned}&\frac{1}{2\sqrt{\pi} i}\int_{(1-d)}\frac{\Gamma(\frac{s}{2})e^{-\frac{r^2}{4t}}}{2\Gamma(\frac{s+1}{2})t^{(1-s)/2}}\zeta(s)\left(\int_{0}^{\infty}e^{-z}z^{(s-1)/2}J_{0}(2\sqrt{-\frac{r^2}{4t}z})dz\right)\pi^{-s}x^{s-1}ds\\
&=\frac{1}{2\sqrt{\pi} i}\int_{(1-d)}\frac{\Gamma(\frac{s}{2})e^{-\frac{r^2}{4t}}}{2\Gamma(\frac{s+1}{2})t^{(1-s)/2}}\left(\int_{0}^{\infty}z^{s-1}\left(\sum_{n\ge1}G_1(nz)-\frac{1}{y}\int_{0}^{\infty}G_1(w)dw\right)dz\right)\\
&\times\pi^{-s}x^{s-1}ds\\
&=\frac{e^{-\frac{r^2}{4t}}}{2\sqrt{t}}\int_{1}^{\infty} \frac{\pi}{\sqrt{y^2-1}}\left(\frac{1}{x}\sum_{n\ge1}G_1\left(\frac{ny\pi}{x\sqrt{t}}\right)-\frac{x\sqrt{t}}{\pi yx}\int_{0}^{\infty}G_1(w)dw\right)dy\\
&=\frac{e^{-\frac{r^2}{4t}}}{2}\int_{1}^{\infty} \frac{1}{\sqrt{y^2-1}}\left(\frac{\pi}{x\sqrt{t}}\sum_{n\ge1}G_1\left(\frac{ny\pi}{x\sqrt{t}}\right)-\frac{1}{ y}\int_{0}^{\infty}G_1(w)dw\right)dy, \end{aligned}\end{equation}

where $G_1(z)=e^{-z^2}zJ_{0}(2\sqrt{-\frac{r^2}{4t}}z).$
\end{proof}

\begin{proof}[Proof of Theorem 1.3] This result follows from the second line in (3.7) with $x$ replaced by $e^{-x}$ upon noting $F^{+}\left(x,\frac{1}{2}+it\right)=F^{+}\left(x,\frac{1}{2}-it\right).$

\end{proof}

1390 Bumps River Rd. \\*
Centerville, MA
02632 \\*
USA \\*
ul. A. E. Ody\'{n}ca 47 \\*
02-606 Warsaw\\*
Poland\\*
E-mail: alexpatk@hotmail.com, alexepatkowski@gmail.com

\end{document}